\newtheorem{problem}{Problem}
\newtheorem{theorem}{Theorem}
\newtheorem{lemma}{Lemma}
\newtheorem{remark}{Remark}
\title{A General Method for Optimal Decentralized Control with Current State/Output Feedback Strategy  }
\author{
Hongdan Li,\ Yawen Sun and\ Huanshui Zhang
\thanks{This work was supported by the Original Exploratory Program Project of National Natural Science Foundation of China (62450004), the Major Basic Research of Natural Science Foundation of Shandong Province (ZR2021ZD14), the National Natural Science Foundation of China (62103241,62273213), Shandong Provincial Natural Science Foundation (ZR2021QF107).}
\thanks{H. Li, and Y. Sun are with College of Electrical Engineering and Automation, Shandong University of Science and Technology, Qingdao, Shandong, P.R.China 266590. H. Zhang is with the College of Electrical Engineering and Automation, Shandong University of Science and Technology, Qingdao 266590, China, and also with the School of Control Science and Engineering, Shandong University, Jinan, Shandong 250061, China (e-mail: hszhang@sdu.edu.cn). }
}
\begin{document}

\pagenumbering{arabic}
 \setcounter{page}{1}

\pagenumbering{arabic} \thispagestyle{empty} \setcounter{page}{1}

\baselineskip 16pt
\date{}
\maketitle
\begin{abstract}
This paper explores the decentralized control of linear deterministic systems in which different controllers operate based on distinct state information, and extends the findings to the output feedback scenario. Assuming the controllers have a linear state feedback structure, we derive the expression for the controller gain matrices using the matrix maximum principle. This results in an implicit expression that couples the gain matrices with the state. By reformulating the backward Riccati equation as a forward equation, we overcome the coupling between the backward Riccati equation and the forward state equation.  Additionally, we employ a gradient descent algorithm to find the solution to the implicit equation. This approach is validated through simulation examples.
\end{abstract}

\section{Introduction}

Optimal control theory is essential to modern control systems, forming the foundation of the entire field \cite{Bryson75}, \cite{Lewis12}, \cite{Zhou96}. Among its various applications, linear quadratic (LQ) optimal control is particularly prominent and representative in linear system synthesis. Initially, research in this area was mainly directed at systems managed by a single controller or multiple controllers with shared information, which could be effectively addressed through augmentation techniques. These centralized control problems were typically resolved using classical approaches.

Decentralized control is a control system design method in which multiple independent controllers work together to achieve the overall control objectives of the system, rather than relying on a single central controller. Each controller makes decisions based on local information it obtains and typically does not directly exchange information with other controllers. Decentralized control systems are widely applied in complex systems such as multi-vehicle formations, smart grids, and large-scale industrial process control. Decentralized control has garnered significant attention, with early research focusing primarily on static aspects like the stochastic team decision problem studied by Marschak \cite{Marschak58} and Radner \cite{Radner62}. Witsenhausen \cite{Witsenhausen68} expanded this research to dynamic cases, introducing the famous \emph{Witsenhausen counterexample}, which demonstrated that optimal solutions for decentralized LQ control could be nonlinear under non-classical information patterns. Research into discrete-time decentralized problems explored the separation of estimation and control, highlighting the one-step delayed sharing pattern \cite{Witsenhausen71}. Additionally, optimal control of LQG systems with this information pattern was studied in \cite{Kurtaran74}. Ba\c{s}ar \cite{Basar78} further contributed by providing unique affine noncooperative equilibrium solutions for multistage LQG decision problems, specifically focusing on scenarios with two decision makers and one-step delayed observation sharing.

Regarding other information structures, various studies have examined decentralized optimal control under different conditions. For instance, \cite{Lessard15} and \cite{Nayyar15} explored partially nested information structures. In cases with partial historical sharing, where controllers share some historical information, Mahajan and Nayyar \cite{Mahajan15} derived optimal linear control strategies using the common information approach. Liang \emph{et al.} \cite{Liang22}, under the assumption of linear control strategies and utilizing the maximum principle, studied decentralized control for networked control systems with partial historical sharing. They considered scenarios where the remote and local controllers have access to different information and presented optimal estimators for both controllers based on asymmetric observations. More recent work by Xu \emph{et al.} \cite{Xu23} investigated a class of discrete-time decentralized LQG control problems involving two controllers and a d-step delayed information sharing pattern, providing explicit forms for the pair of optimal linear controllers. For additional research on nonclassical information structures, references \cite{Asghari15}, \cite{Chang11}, \cite{Rotkowitz06}, and others offer further insights.

It is worth noting that the aforementioned literature focuses on scenarios where all measurement information is corrupted by noise. Consequently, filtering is incorporated into the designed controllers. However, in many control and regulator problems, a common and practical assumption is the availability of ``noise-free" measurements. Therefore, it is necessary to further consider the deterministic case with asymmetric information. It has important applications in deterministic control fields such as process control, robotic systems, traffic management, and so on. This paper investigates decentralized control of linear deterministic systems where different controllers are based on different state information. Under the assumption that the controllers have a linear state feedback structure, we derive the expression for the controller gain matrices by applying the matrix maximum principle. This is an implicit expression that couples the gain matrices with the state. The coupling between the backward Riccati equation and the forward state equation makes solving this equation very challenging. Therefore, we consider the algebraic Riccati equation in the steady-state case and solve it by reformulating the backward Riccati equation as a forward equation. Furthermore, we use a gradient descent algorithm to find the solution to the implicit equation. This approach is validated through simulation examples.

This paper can be broadly divided into the following sections. The second section describes the problem under study and the related preparatory work. In the third section, we present the main results and the corresponding algorithm design. The fourth section demonstrates the effectiveness of the algorithm by providing a four-dimensional simulation example. Finally, we conclude the paper with a summary.

\emph{\textbf{Notation}:} ${\mathbb{R}}^n$ is the $n$-dimensional Euclidean space. $Y'$ is  the transpose of $Y$ and  $Y\geq 0 \ (Y>0)$ means  that  $Y$ is symmetric positive semi-definite  (positive definite).

\section{Problem  Statement and Preliminary}
\normalsize
\subsection{Problem  Statement}
Consider the following discrete-time linear system
\begin{eqnarray}
x(k+1)\hspace{-3mm}&=&\hspace{-3mm}Ax(k)+Bu(k), \label{f2.1}
\end{eqnarray}
where $x\in \mathbb{R}^{n},  u\in \mathbb{R}^{m}$ are the state and input, respectively. $A, B$ are constant matrices with suitable dimensions.

Assume that the controller has the following structure
\begin{eqnarray}
u(k)\hspace{-3mm}&=&\hspace{-3mm}K(k)x(k)=\begin{bmatrix}K_{1}(k)&0\\0&K_{2}(k)\end{bmatrix}x(k),
\label{f2.2}
\end{eqnarray}
where $K_{i}(k)$ are the gain matrices to be determined.

Define the following quadratic cost functional
\begin{eqnarray}
J_{N}\hspace{-3mm}&=&\hspace{-3mm}\sum^{N}_{k=0}[x(k)'Qx(k)+u(k)'Ru(k)]+x(N+1)'P_{N+1}x(N+1),\label{f2.3}
\end{eqnarray}
where the weighting matrices $Q, R$ and $P_{N+1}\geq0.$

\begin{problem}
Find the gain matrices $K_{1}$ and $K_{2}$ to minimize the cost functional (\ref{f2.3}) subject to system equation (\ref{f2.1}) and linear structure (\ref{f2.2}).
\end{problem}

\begin{remark}
Compared with the classical LQ, it is essentially a decentralized control problem based on private information. As a result, the classical control theory cannot be directly applied. Furthermore, since the controllers are based only on their private state information and do not share it, the dimensional expansion method fails.
\end{remark}

\subsection{Preliminary}

For discussing, from (\ref{f2.2}), let $x(k)=\begin{bmatrix}x_{1}(k)\\x_{2}(k)\end{bmatrix}$. System (\ref{f2.1}) with (\ref{f2.2}) can be rewritten
\begin{eqnarray}
\left\{
  \begin{array}{ll}
   x_{1}(k+1)=(A_{11}+B_{11}K_{1}(k))x_{1}(k)
+(A_{12}+B_{12}K_{2}(k))x_{2}(k),\\
   x_{2}(k+1)=(A_{21}+B_{21}K_{1}(k))x_{1}(k)
+(A_{22}+B_{22}K_{2}(k))x_{2}(k).
  \end{array}
\right.
\label{f2.4}
\end{eqnarray}

Considering that the essence of the problem is to find the optimal gain matrices $K_{1}$ and $K_{2}$, we adopt the matrix maximum principle for the solution. Therefore, let 
$X_{11}(k+1)=x_{1}(k+1)x_{1}(k+1)'$, $X_{12}(k+1)=x_{1}(k+1)x_{2}(k+1)'$, $X_{22}(k+1)=x_{2}(k+1)x_{2}(k+1)'$.
Accordingly,  we have
\begin{small}\begin{eqnarray}
X_{11}(k+1)
\hspace{-3mm}&=&\hspace{-3mm}(A_{11}+B_{11}K_{1}(k))X_{11}(k)(A_{11}+B_{11}K_{1}(k))'
+(A_{11}+B_{11}K_{1}(k))X_{12}(k)(A_{12}+B_{12}K_{2}(k))'\nonumber\\
\hspace{-3mm}&&\hspace{-3mm}
+(A_{12}+B_{12}K_{2}(k))X_{12}(k)'(A_{11}+B_{11}K_{1}(k))'
+(A_{12}+B_{12}K_{2}(k))X_{22}(k)(A_{12}+B_{12}K_{2}(k))',\label{x11}\\
X_{12}(k+1)
\hspace{-3mm}&=&\hspace{-3mm}(A_{11}+B_{11}K_{1}(k))X_{11}(k)(A_{21}+B_{21}K_{1}(k))'
+(A_{11}+B_{11}K_{1}(k))X_{12}(k)(A_{22}+B_{22}K_{2}(k))'\nonumber\\
\hspace{-3mm}&&\hspace{-3mm}
+(A_{12}+B_{12}K_{2}(k))X_{12}(k)'(A_{21}+B_{21}K_{1}(k))'
+(A_{12}+B_{12}K_{2}(k))X_{22}(k)(A_{22}+B_{22}K_{2}(k))',\label{x12}\\
X_{22}(k+1)
\hspace{-3mm}&=&\hspace{-3mm}(A_{21}+B_{21}K_{1}(k))X_{11}(k)(A_{21}+B_{21}K_{1}(k))'
+(A_{21}+B_{21}K_{1}(k))X_{12}(k)(A_{22}+B_{22}K_{2}(k))'\nonumber\\
\hspace{-3mm}&&\hspace{-3mm}
+(A_{22}+B_{22}K_{2}(k))X_{12}(k)'(A_{21}+B_{21}K_{1}(k))'
+(A_{22}+B_{22}K_{2}(k))X_{22}(k)(A_{22}+B_{22}K_{2}(k))'.\label{x22}
\end{eqnarray}\end{small}

From (\ref{f2.1}) and (\ref{f2.2}), the cost functional (\ref{f2.3}) can be rewritten as follows.
\begin{eqnarray}
J_{N}
\hspace{-3mm}&=&\hspace{-3mm}\sum^{N}_{k=0}\{x_{1}(k)'[Q_{11}+K_{1}(k)'R_{11}K_{1}(k)]x_{1}(k)
+2x_{1}(k)'[Q_{12}+K_{1}(k)'R_{12}K_{2}(k)]x_{2}(k)\nonumber\\
\hspace{-3mm}&&\hspace{-3mm}
+x_{2}(k)'[Q_{22}+K_{2}(k)'R_{22}K_{2}(k)]x_{2}(k)\}+x'(N+1)P_{N+1}x(N+1)\nonumber\\
\hspace{-3mm}&=&\hspace{-3mm}tr\sum^{N}_{k=0}\{[Q_{11}+K_{1}(k)'R_{11}K_{1}(k)]X_{11}(k)
+2[Q_{12}+K_{1}(k)'R_{12}K_{2}(k)]X_{12}(k)\nonumber\\
\hspace{-3mm}&&\hspace{-3mm}
+[Q_{22}+K_{2}(k)'R_{22}K_{2}(k)]X_{22}(k)\}
,\label{2.5}
\end{eqnarray}
where $Q=\begin{bmatrix}Q_{11}&Q_{12}\\Q_{12}'&Q_{22}\end{bmatrix}$, and $R=\begin{bmatrix}R_{11}&R_{12}\\R_{12}'&R_{22}\end{bmatrix}$.

\section{Main Result}
\subsection{Main Theorem}
Based on the preparatory work in the aforementioned section, we now give the main result.
\begin{theorem}
The solution of Problem 1 is given by
\begin{eqnarray}
\Upsilon_{11}(k)K_{1}(k)X_{11}(k)+M_{11}(k)X_{11}(k)+\Upsilon_{12}(k)K_{2}(k)X_{12}(k)'
+M_{12}(k)X_{12}(k)'&=&0,\label{3.1}\\
\Upsilon_{22}(k)K_{2}(k)X_{22}(k)+M_{22}(k)X_{22}(k)+\Upsilon_{12}(k)'K_{1}(k)X_{12}(k)
+M_{21}(k)X_{12}(k)&=&0, \label{3.2}
\end{eqnarray}
where $X_{ij}(k),\ i,j=1,2$ satisfying (\ref{x11})-(\ref{x22}), and
\begin{eqnarray*}
\Upsilon(k)&=&\begin{bmatrix}\Upsilon_{11}(k)&\Upsilon_{12}(k)\\ \Upsilon_{12}(k)'&\Upsilon_{22}(k)\end{bmatrix}=R+B'P(k+1)B,\label{3.3}\\
M(k)&=&\begin{bmatrix}M_{11}(k)&M_{12}(k)\\ M_{21}(k)&M_{22}(k)\end{bmatrix}=B'P(k+1)A \label{3.4}
\end{eqnarray*}
with
\begin{eqnarray}
P(k)
=Q+K(k)'RK(k)+(A+BK(k))'P(k+1)(A+BK(k)),\label{3.5}
\end{eqnarray}
whose terminal value is $P_{N+1}$.

\end{theorem}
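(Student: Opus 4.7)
The plan is to apply the matrix maximum principle to the lifted system (\ref{x11})--(\ref{x22}) with the cost in trace form (\ref{2.5}), subject to the structural constraint that $K(k)$ be block diagonal. First, I would stack the three blocks into $X(k)=x(k)x(k)'$ so the dynamics become $X(k+1)=(A+BK(k))X(k)(A+BK(k))'$ and the cost reads $J_N=\mathrm{tr}\sum_{k=0}^{N}[(Q+K(k)'RK(k))X(k)]+\mathrm{tr}[P_{N+1}X(N+1)]$, which reproduces (\ref{2.5}) block-wise and is convenient for adjoint calculus.

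Next, I would introduce a symmetric costate matrix $P(k+1)$ and form the Hamiltonian $H(k)=\mathrm{tr}[(Q+K(k)'RK(k))X(k)]+\mathrm{tr}[P(k+1)(A+BK(k))X(k)(A+BK(k))']$. The backward adjoint equation $P(k)=\partial H(k)/\partial X(k)$, computed using symmetry of $P$ and $X$, recovers exactly the recursion (\ref{3.5}) with terminal value $P(N+1)=P_{N+1}$ inherited from the terminal cost. This step is routine and mirrors the centralized LQ derivation.

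The substantive step is stationarity with respect to $K$. Perturbing $K(k)$ by an admissible variation $\delta K(k)=\mathrm{diag}(\delta K_1(k),\delta K_2(k))$ and using the standard trace/transpose identities for $K'RK$ and for the congruence $(A+BK)X(A+BK)'$ yields $\delta H(k)=2\,\mathrm{tr}\bigl[\delta K(k)'(\Upsilon(k)K(k)+M(k))X(k)\bigr]$ with $\Upsilon(k)=R+B'P(k+1)B$ and $M(k)=B'P(k+1)A$. Because $\delta K$ is block diagonal, this trace only sees the $(1,1)$ and $(2,2)$ diagonal blocks of $(\Upsilon(k)K(k)+M(k))X(k)$, and arbitrariness of $\delta K_1$ and $\delta K_2$ forces each of these blocks to vanish. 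Expanding the diagonal blocks with the partitions of $\Upsilon$, $M$, $K$ and $X$ then produces exactly (\ref{3.1}) and (\ref{3.2}).

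The main obstacle is precisely this decentralized constraint on $K(k)$. In the centralized case one could set $\Upsilon(k)K(k)+M(k)=0$ and recover the classical LQ gain $K(k)=-\Upsilon(k)^{-1}M(k)$; here, admissible variations are restricted to the block-diagonal subspace, so only the diagonal blocks of $(\Upsilon K+M)X$ are annihilated. This subspace projection is what couples $K_1$ and $K_2$ through the off-diagonal data $\Upsilon_{12}$, $M_{12}$, $M_{21}$ and $X_{12}$, and it is what produces the implicit system (\ref{3.1})--(\ref{3.2}) rather than a decoupled Riccati equation. Keeping the trace/transpose bookkeeping straight in the gradient computation, and then correctly reading off only the diagonal blocks, is the only real technical care the argument requires.
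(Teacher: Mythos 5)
Your proposal is correct and follows essentially the same route as the paper: a trace-form Hamiltonian with costate $P(k+1)$, the adjoint relation $P(k)=\partial H/\partial X(k)$ giving \eqref{3.5}, and stationarity of $H$ with respect to the block-diagonal gain giving \eqref{3.1}--\eqref{3.2}. The only cosmetic difference is that you compute the full gradient $2(\Upsilon(k)K(k)+M(k))X(k)$ and then project onto the block-diagonal subspace of admissible variations, whereas the paper differentiates with respect to $K_1(k)$ and $K_2(k)$ blockwise; the resulting equations are identical.
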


\begin{proof}
From (\ref{2.5}), we introduce the following Hamiltonian functional with matrix $P(k+1)$.
\begin{eqnarray*}
H_{N}\hspace{-3mm}&=&\hspace{-3mm}tr[(Q+K(k)'RK(k))X(k)+(A+BK(k))X(k)(A+BK(k))'P(k+1)]
\end{eqnarray*}
Due to the special information structure of the gain matrix $K(k)$, differentiating the Hamiltonian function yields
\begin{eqnarray*}
0=\frac{\partial H_{N}}{\partial K_{1}(k)}
\hspace{-3mm}&=&\hspace{-3mm}[R_{11}+B_{11}'P_{11}(k+1)B_{11}
+B_{21}'P_{12}(k+1)B_{11}+B_{11}'P_{12}(k+1)B_{21}\\
\hspace{-3mm}&&\hspace{-3mm}
+B_{21}'P_{22}(k+1)B_{21}]K_{1}(k)X_{11}(k)
+[B_{11}'P_{11}(k+1)A_{11}+B_{21}'P_{12}(k+1)A_{11}\\
\hspace{-3mm}&&\hspace{-3mm}+B_{11}'P_{12}(k+1)A_{21}+B_{21}'P_{22}(k+1)A_{21}]X_{11}(k)
+[R_{12}K_{2}(k)+B_{11}'P_{11}(k+1)(A_{12}\\
\hspace{-3mm}&&\hspace{-3mm}+B_{12}K_{2}(k))+B_{11}'P_{12}(k+1)(A_{22}+B_{22}K_{2}(k))
+B_{21}'P_{22}(k+1)(A_{22}+B_{22}K_{2}(k))\\
\hspace{-3mm}&&\hspace{-3mm}+B_{21}'P_{12}(k+1)(A_{12}+B_{12}K_{2}(k))]X_{12}(k)'\\
\hspace{-3mm}&=&\hspace{-3mm}\Upsilon_{11}(k)K_{1}(k)X_{11}(k)+M_{11}(k)X_{11}(k)+\Upsilon_{12}(k)K_{2}(k)X_{12}(k)'
+M_{12}(k)X_{12}(k)',\\
0=\frac{\partial H_{N}}{\partial K_{2}(k)}
\hspace{-3mm}&=&\hspace{-3mm}[R_{22}+B_{12}'P_{11}(k+1)B_{12}
+B_{12}'P_{12}(k+1)B_{22}+B_{22}'P_{12}(k+1)B_{12}\\
\hspace{-3mm}&&\hspace{-3mm}+B_{22}'P_{22}(k+1)B_{22}]K_{2}(k)X_{22}(k)
+[B_{12}'P_{11}(k+1)A_{12}+B_{12}'P_{12}(k+1)A_{22}\\
\hspace{-3mm}&&\hspace{-3mm}+B_{22}'P_{12}(k+1)A_{12}
+B_{22}'P_{22}(k+1)A_{22}]X_{22}(k)
+[R_{12}K_{1}(k)+B_{12}'P_{11}(k+1)(A_{11}\nonumber\\
\hspace{-3mm}&&\hspace{-3mm}+B_{11}K_{1}(k))+B_{22}'P_{12}(k+1)(A_{11}+B_{11}K_{1}(k))
+B_{12}'P_{12}(k+1)(A_{21}+B_{21}K_{1}(k))\nonumber\\
\hspace{-3mm}&&\hspace{-3mm}+B_{22}'P_{22}(k+1)
(A_{21}+B_{21}K_{1}(k))]X_{12}(k)
\nonumber\\
\hspace{-3mm}&=&\hspace{-3mm}\Upsilon_{22}(k)K_{2}(k)X_{22}(k)+M_{22}(k)X_{22}(k)+\Upsilon_{12}(k)'K_{1}(k)X_{12}(k)
+M_{21}(k)X_{12}(k),\\
P(k)=\frac{\partial H_{N}}{\partial X(k)}
\hspace{-3mm}&=&\hspace{-3mm}Q+K(k)'RK(k)+(A+BK(k))'P(k+1)(A+BK(k)),
\end{eqnarray*}
which are the solution of Problem 1.

\end{proof}

\begin{remark}
The above result provides the expression for $K_{i}(k)$, which is in the form based on the forward state Lyapunov equation and the backward Riccati equation. It is very challenging to solve it directly. Considering that in the steady-state form of the Riccati equation (i.e., the algebraic Riccati equation), the results obtained through forward sequence approximation and backward approximation are consistent, we solve it by transforming the backward Riccati equation into a forward equation. It makes the aforementioned equations (\ref{3.1}) and (\ref{3.2}) solvable. The specific algorithm is presented below.
\end{remark}

\subsection{Algorithm Design}
From the above main results, it can be seen that solving Problem 1 hinges on solving the forward and backward matrix equations (\ref{3.1})-(\ref{3.2}) and (\ref{3.5}), which is difficult to solve. Inspired by solving algebraic equations under steady-state conditions, we transform it into solving forward iterative equations. That is, transforming the backward equation (\ref{3.5}) into a forward iteration with given initial conditions.
\begin{eqnarray}
P(k+1)
=Q+K(k)'RK(k)+(A+BK(k))'P(k)(A+BK(k))\label{3.05}
\end{eqnarray}
with the terminal value $P(0)=\delta I \ (\delta>0)$.

For convenience of designing the following algorithm, the following lemma is given.
\begin{lemma}\cite{Graham18}\label{l1}
For any three matrices $A, B$ and $C$ of suitable dimension,
\begin{eqnarray*}
vec(ABC)=(C'\otimes A)vec(B).
\end{eqnarray*}
\end{lemma}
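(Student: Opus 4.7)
The plan is to verify the identity column by column using the column-stacking definition of $vec(\cdot)$. Let $c_j$ denote the $j$-th column of $C$; then the $j$-th column of $ABC$ is simply $AB c_j$. The key observation is that $B c_j = (c_j' \otimes I_m)\,vec(B)$, where $m$ is the number of rows of $B$. This holds because multiplying $B$ on the right by a vector takes a linear combination of its columns, and that particular linear combination is exactly what the block row $(c_j' \otimes I_m) = [\,C_{1j}I_m,\,C_{2j}I_m,\,\dots\,]$ extracts from the stacked vector $vec(B)$.

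The next step is to pull $A$ inside the Kronecker product via the mixed-product property $(X_1 X_2)\otimes(Y_1 Y_2) = (X_1\otimes Y_1)(X_2\otimes Y_2)$, applied with $X_1 = 1$, $Y_1 = A$, $X_2 = c_j'$, $Y_2 = I_m$, which yields $A(c_j' \otimes I_m) = c_j' \otimes A$. Combining the two steps gives
\begin{eqnarray*}
(ABC)_{\cdot j} = (c_j' \otimes A)\,vec(B)
\end{eqnarray*}
for every column index $j$.

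Finally, I would stack these column identities vertically over all column indices of $C$. The left-hand side assembles into $vec(ABC)$, while the right-hand side assembles into a block-row matrix whose $j$-th block row is $c_j' \otimes A$. Since the $j$-th row of $C'$ is $c_j'$, the definition of the Kronecker product identifies this block matrix as precisely $C' \otimes A$, completing the proof.

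The only minor obstacle is the mixed-product manipulation in the middle step; if one wants to bypass Kronecker identities altogether, an alternative is to compare $(i,j)$-entries directly, noting that both sides evaluate at position $i + (j-1)p$ to $\sum_{k,l} A_{ik} B_{kl} C_{lj}$. As the result is standard and credited to Graham18, either approach is routine bookkeeping, and I would favor the column-stacking route for how cleanly it reveals the block structure that the algorithm in the sequel will exploit.
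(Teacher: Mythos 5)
The paper does not prove this lemma at all --- it is quoted verbatim from the cited reference \cite{Graham18} as a known fact, so there is no in-paper argument to compare yours against. Your column-stacking proof is correct and complete: the identity $Bc_j=(c_j'\otimes I_m)\,vec(B)$, the mixed-product step $A(c_j'\otimes I_m)=c_j'\otimes A$, and the final identification of the stacked block rows $c_j'\otimes A$ with $C'\otimes A$ all check out, and the entrywise fallback you mention would work just as well.
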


According to Lemma \ref{l1}, equations (\ref{3.1}) and (\ref{3.2}) can be rewritten as follows.
\begin{align*}
\bar{A}(k)\bar{x}(k)=\bar{b}(k),
\end{align*}
where
\begin{align*}
\bar{A}(k)={}&\left[\begin{array}{cccc}
	X_{11}(k)'\otimes \Upsilon_{11}(k)
        & X_{12}(k)\otimes \Upsilon_{12}(k)     \cr
	X_{12}(k)'\otimes \Upsilon_{12}(k)'
        & X_{22}(k)\otimes \Upsilon_{22}(k)    \cr
\end{array}\right],
\bar{x}(k)=vec\left[\begin{array}{cccc}
	K_1(k)           \cr
	K_2(k)     \cr
\end{array}\right]\\
\bar{b}(k)={}&-vec\left[\begin{array}{cccc}
	M_{11}(k)X_{11}(k)+M_{12}(k)X_{12}(k)'           \cr
	M_{22}(k)X_{22}(k)+M_{21}(k)X_{12}(k)     \cr
\end{array}\right].
\end{align*}

Now, let
\begin{align*}
f(\bar{x}(k)) = \left(\bar{A}(k)\bar{x}(k)-\bar{b}(k)\right)'\left(\bar{A}(k)\bar{x}(k)-\bar{b}(k)\right).
\end{align*}
Obviously, the above function's twice differentiability with respect to $\bar{x}$ can be ensured. Our goal is to find the optimal $\bar{x}$ to minimize $f$, that is, $\min\limits_{\bar{x}}f(\bar{x}(k))$. Actually, for every time $k$, the following optimal control problem can be established
\begin{eqnarray}
&&\min\limits_{\bar{x}}\sum^{I}_{i=0}[f(\bar{x}^{i}(k))+\frac{1}{2}v^{i}(k)'\bar{R}v^{i}(k)]+f(\bar{x}^{I+1}(k)), \label{a1}\\
&& subject \ \ to\ \ \bar{x}^{i+1}(k)=\bar{x}^{i}(k)+v^{i}(k), \label{a2}
\end{eqnarray}
where $v^{i}(k)$ is the control of system (\ref{a2}), and integer $I>0$ is the control terminal time. The weighting matrix $\bar{R}$ is positive. According to the approach proposed in the literature \cite{Wang24}, the algorithm for solving equations (\ref{3.1}) and (\ref{3.2}) is given as follows.

\begin{algorithm}[htb]
	\renewcommand{\thealgorithm}{}
	\caption{\hspace{-0.6em} \textbf{1} Numerical algorithm for solving Problem 1.}
	\label{alg:Framwork}
	\begin{algorithmic}[1]
\State \textbf{Initialization:}  $\bar{x}^0(0)$, $\bar{R}$, $I$;
\For{$i=0$ \textbf{to} $I$}
\State $g_0(\bar{x}^i(0))=(\bar{R}+f^{(2)}(\bar{x}^i(0)))^{-1}f^{(1)}(\bar{x}^i(0))$;
\For{$j=1$ \textbf{to} $i$}
\State $g_{j}(\bar{x}^i(0))=(\bar{R}+f^{(2)}(\bar{x}^i(0)))^{-1}[f^{(1)}(\bar{x}^i(0))+\bar{R}g_{j-1}(\bar{x}^i(0))];$
\EndFor
\State $\bar{x}^{i+1}(0)=\bar{x}^{i}(0)-g_{i}(\bar{x}^i(0));$
\EndFor
\State $\bar{x}(0)=\bar{x}^{I}(0)$;
\For{$k=1$ \textbf{to} $N$}
\State  \textbf{Initialization:} $\bar{x}^0(k)=\bar{x}(k-1)$;
   \For{$i=0$ \textbf{to} $I$}
\State $g_0(\bar{x}^i(k))=(\bar{R}+f^{(2)}(\bar{x}^i(k)))^{-1}f^{(1)}(\bar{x}^i(k))$;
\For{$j=1$ \textbf{to} $i$}
\State $g_{j}(\bar{x}^i(k))=(\bar{R}+f^{(2)}(\bar{x}^i(k)))^{-1}[f^{(1)}(\bar{x}^i(k))+\bar{R}g_{j-1}(\bar{x}^i(k))];$
\EndFor
\State $\bar{x}^{i+1}(k)=\bar{x}^{i}(k)-g_{i}(\bar{x}^i(k));$
\EndFor
\State $\bar{x}(k)=\bar{x}^{I}(k)$;
\EndFor
	\end{algorithmic}
\end{algorithm}

\subsection{Extend to Output Feedback Control}
The following will mainly explain how the above results can be extended to more general output feedback scenarios. Consider system (\ref{f2.1}) with the following output equations
\begin{eqnarray}
y_{i}(k)=H_{i}x(k),\ i=1,2. \label{o1}
\end{eqnarray}
In this case, our goal is to find the controllers with the following structures to minimize cost functional (\ref{f2.3}).
\begin{eqnarray}
u_{i}(k)=K^{y}_{i}(k)y_{i}(k).\label{o2}
\end{eqnarray}
For discussing, let $H=\begin{bmatrix}H_{1}&0\\0&H_{2}\end{bmatrix}, \ K^{y}(k)=\begin{bmatrix}K^{y}_{1}&0\\0&K^{y}_{2}\end{bmatrix}$.
In this case, we get
\begin{eqnarray}
X(k+1)=(A+BK^{y}H)X(k)(A+BK^{y}H)'.\label{s1}
\end{eqnarray}
Define the following Hamiltonian function
\begin{eqnarray}
\hspace{-3mm}&&\hspace{-3mm}H^{y}_{N}=tr[(Q+H'K^{y'}RK^{y}H)X(k)
+(A+BK^{y}H)X(k)(A+BK^{y}H)'P^{y}(k+1)]
\end{eqnarray}
Hence, we can obtain the expression for the gain matrices $K^{y}_{i}$ as follows.
\begin{small}\begin{eqnarray}
0\hspace{-3mm}&=&\hspace{-3mm}R_{11}K^{y}_{1}H_{1}X_{11}H_{1}'+R_{12}K^{y}_{2}H_{2}X'_{12}H_{1}'\nonumber\\
\hspace{-3mm}&&\hspace{-3mm}+[(B'_{11}P^{y}_{11}(k+1)+B'_{21}P^{y}_{12}(k+1))(A_{11}+B_{11}K^{y}_{1}H_{1})
+(B'_{11}P^{y}_{12}(k+1)+B'_{21}P^{y}_{22}(k+1))(A_{21}+B_{21}K^{y}_{1}H_{1})]X_{11}H_{1}'\nonumber\\
\hspace{-3mm}&&\hspace{-3mm}
+[(B'_{11}P^{y}_{11}(k+1)+B'_{21}P^{y}_{12}(k+1))(A_{12}+B_{12}K^{y}_{2}H_{2})
+(B'_{11}P^{y}_{12}(k+1)+B'_{21}P^{y}_{22}(k+1))(A_{22}+B_{22}K^{y}_{2}H_{2})]X'_{12}H_{1}'\nonumber\\
\hspace{-3mm}&=&\hspace{-3mm}\Upsilon^{y}_{11}K^{y}_{1}H_{1}X_{11}H_{1}'
+M^{y}_{11}(k)X_{11}(k)H_{1}'+\Upsilon_{12}^{y}(k)K^{y}_{2}(k)H_{2}X_{12}(k)'H_{1}'
+M^{y}_{12}(k)X_{12}(k)'H_{1}'
\nonumber\\
0\hspace{-3mm}&=&\hspace{-3mm}R'_{12}K^{y}_{1}H_{1}X_{12}H_{2}'+R_{22}K^{y}_{2}H_{2}X_{22}H_{2}'\nonumber\\
\hspace{-3mm}&&\hspace{-3mm}
+[(B'_{12}P^{y}_{11}(k+1)+B'_{22}P^{y'}_{12}(k+1))(A_{11}+B_{11}K^{y}_{1}H_{1})
+(B'_{12}P^{y}_{12}(k+1)+B'_{22}P^{y}_{22}(k+1))(A_{21}+B_{21}K^{y}_{1}H_{1})]X_{12}H_{2}'\nonumber\\
\hspace{-3mm}&&\hspace{-3mm}
+[(B'_{12}P^{y}_{11}(k+1)+B'_{22}P^{y'}_{12}(k+1))(A_{12}+B_{12}K^{y}_{2}H_{2})
+(B'_{12}P^{y}_{12}(k+1)+B'_{22}P^{y}_{22}(k+1))(A_{22}+B_{22}K^{y}_{2}H_{2})]X'_{22}H_{2}'\nonumber\\
\hspace{-3mm}&=&\hspace{-3mm}\Upsilon^{y'}_{12}K^{y}_{1}H_{1}X_{11}H_{2}'
+M^{y}_{21}(k)X_{12}(k)H_{2}'+\Upsilon_{22}^{y}(k)K^{y}_{2}(k)H_{2}X_{22}(k)H_{2}'
+M^{y}_{22}(k)X_{22}(k)'H_{2}',
\nonumber
\end{eqnarray}\end{small}
where
\begin{eqnarray*}
\Upsilon^{y}(k)&=&\begin{bmatrix}\Upsilon^{y}_{11}(k)&\Upsilon^{y}_{12}(k)\\ \Upsilon^{y}_{12}(k)'&\Upsilon^{y}_{22}(k)\end{bmatrix}=R+B'P^{y}(k+1)B,\label{o3.3}\\
M^{y}(k)&=&\begin{bmatrix}M^{y}_{11}(k)&M^{y}_{12}(k)\\ M^{y}_{21}(k)&M^{y}_{22}(k)\end{bmatrix}=B'P^{y}(k+1)A, \label{o3.4}
\end{eqnarray*}
with
\begin{eqnarray}
P^{y}(k)=Q+H'K^{y'}(k)RK^{y}(k)H
+(A+BK^{y}(k)H)'P^{y}(k+1)(A+BK^{y}(k)H).\label{o3.5}
\end{eqnarray}

\section{A Numerical Example}
Consider Problem 1 in the four-dimensional case with the following parameters.
\begin{eqnarray*}
A&=&\left[\begin{array}{cccc}
	1.1 & 0 & 0.1 & 0    \cr
	0.1 & 1.1 & 0 & 1    \cr
          0 & 0 & 1 & 0.1    \cr
          0 & 0.3 & 0 & 1.2
\end{array}\right],
B=\left[\begin{array}{cccc}
	1 & 1 & 0 & 2    \cr
	0 & 1 & 0 & 0    \cr
          0 & 0 & 2 & 0    \cr
          0 & 1 & 0 & 1
\end{array}\right],
Q=\left[\begin{array}{cccc}
	1 & 0 & 0 & 0    \cr
	0 & 1 & 0 & 0    \cr
          0 & 0 & 1 & 0    \cr
          0 & 0 & 0 & 1
\end{array}\right],\\
R&=&\left[\begin{array}{cccc}
	0.5 & 0 & 0 & 0    \cr
	0 & 0.5 & 0 & 0    \cr
          0 & 0 & 0.5 & 0    \cr
          0 & 0 & 0 & 0.5
\end{array}\right],
P(0)=\left[\begin{array}{cccc}
	2 & 0 & 0 & 0    \cr
	0 & 2 & 0 & 0    \cr
          0 & 0 & 2 & 0    \cr
          0 & 0 & 0 & 2
\end{array}\right],
x(0)=\left[\begin{array}{cccc}
	5     \cr
	3     \cr
          2     \cr
          3
\end{array}\right],
\end{eqnarray*}
and the number of experiments $I=90$, terminal time $N=50$.  Applying Algorithm 1, we calculate the solution of Problem 1, whose components as shown in Figure \ref{fig2} and Figure \ref{fig3}.
\begin{figure}[tbh]
\begin{centering}
\includegraphics[width=15cm]{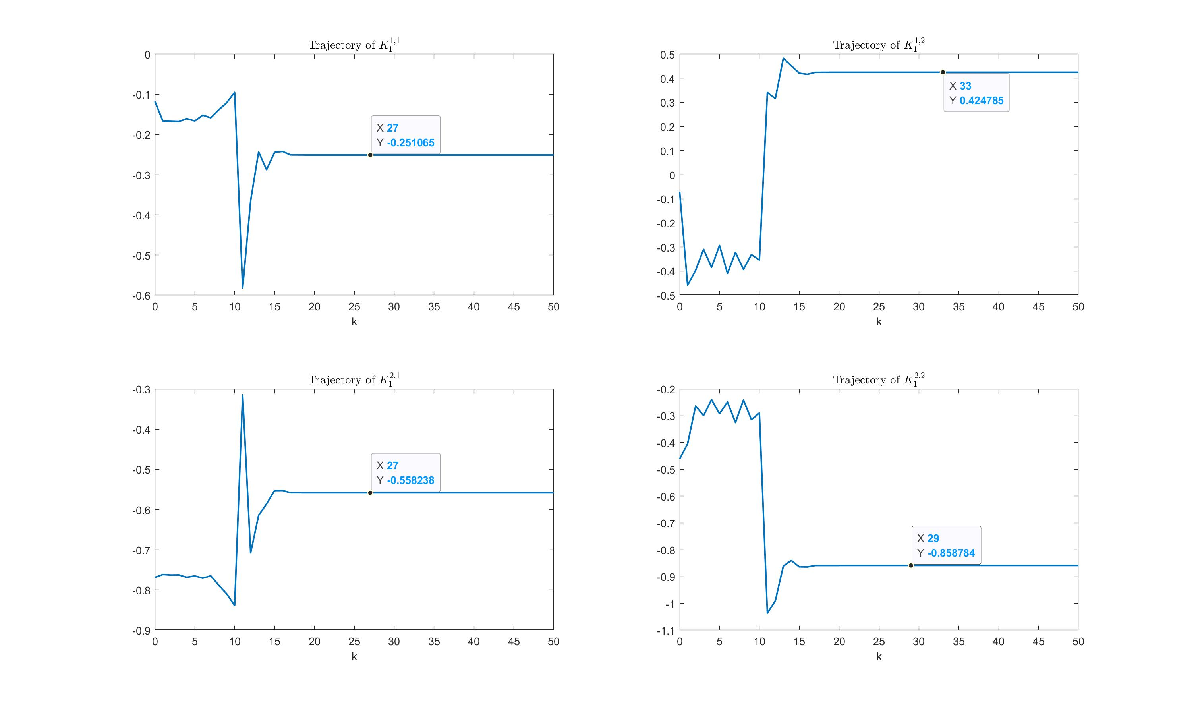}
\caption{\label{fig2}The optimal solution $K_{1}$}
\par\end{centering}
\end{figure}

\begin{figure}[tbh]
\begin{centering}
\includegraphics[width=15cm]{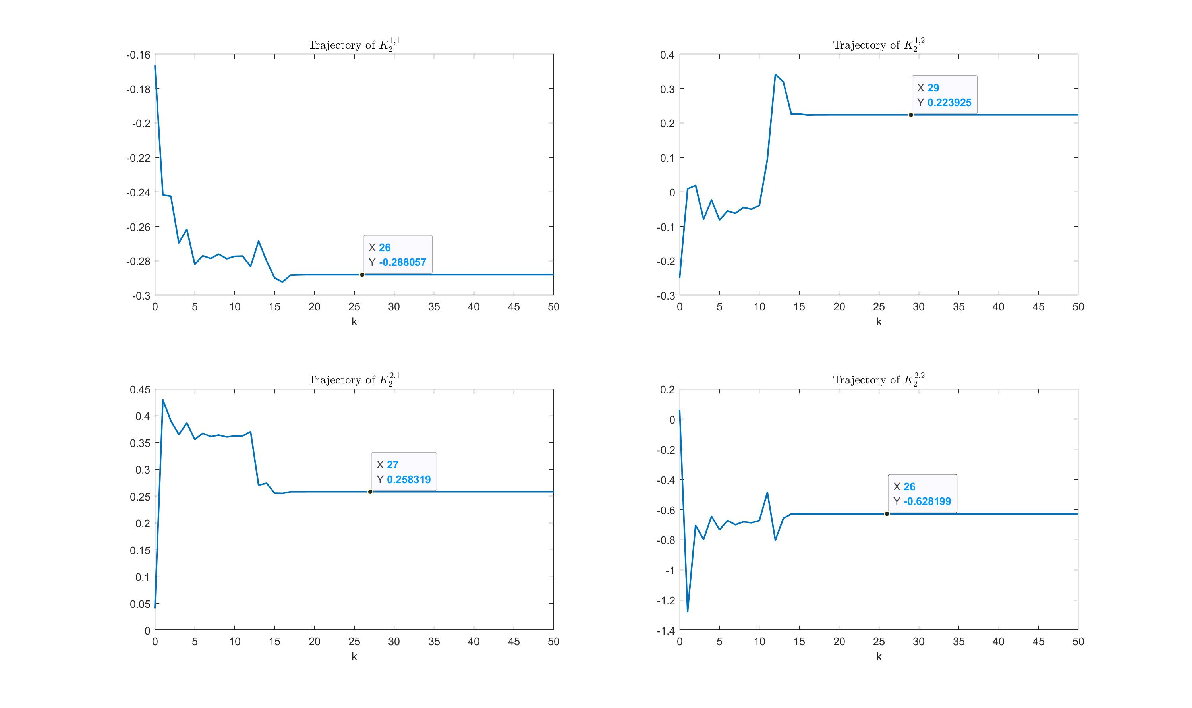}
\par\end{centering}
\caption{\label{fig3} The optimal solution $K_{2}$}
\end{figure}
And the error trajectory plot resulting from solving equations (\ref{3.1}) and (\ref{3.2}) is depicted in Figure \ref{fig1}. The calculation accuracy can reach up to $10^{-3}$,
demonstrating the effectiveness of the algorithm.
\begin{figure}[tbh]
\begin{centering}
\includegraphics[width=10cm]{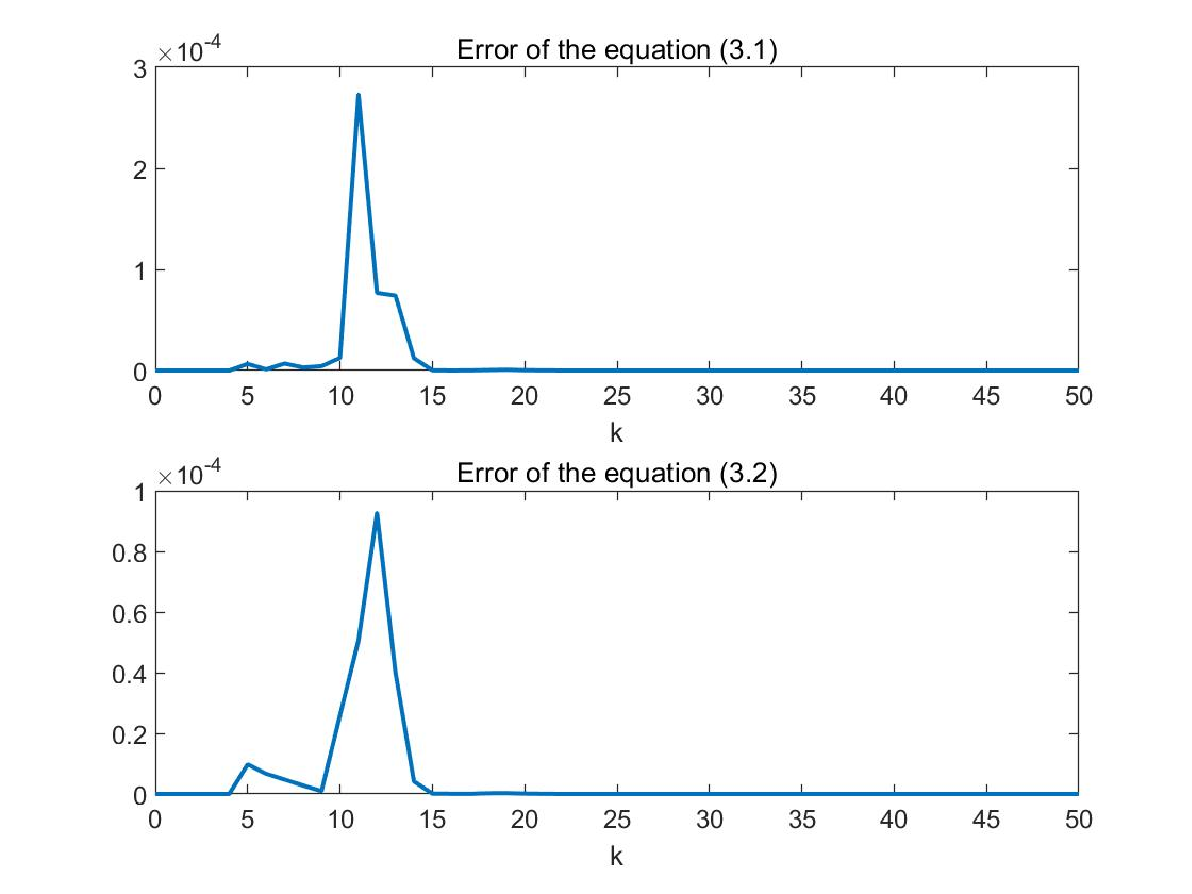}
\caption{\label{fig1} Calculating error of the equation}
\par\end{centering}
\end{figure}
Moreover, The state trajectory plot under the optimal solution is depicted in Figure \ref{fig4}.
\begin{figure}[tbh]
\begin{centering}
\includegraphics[width=8cm]{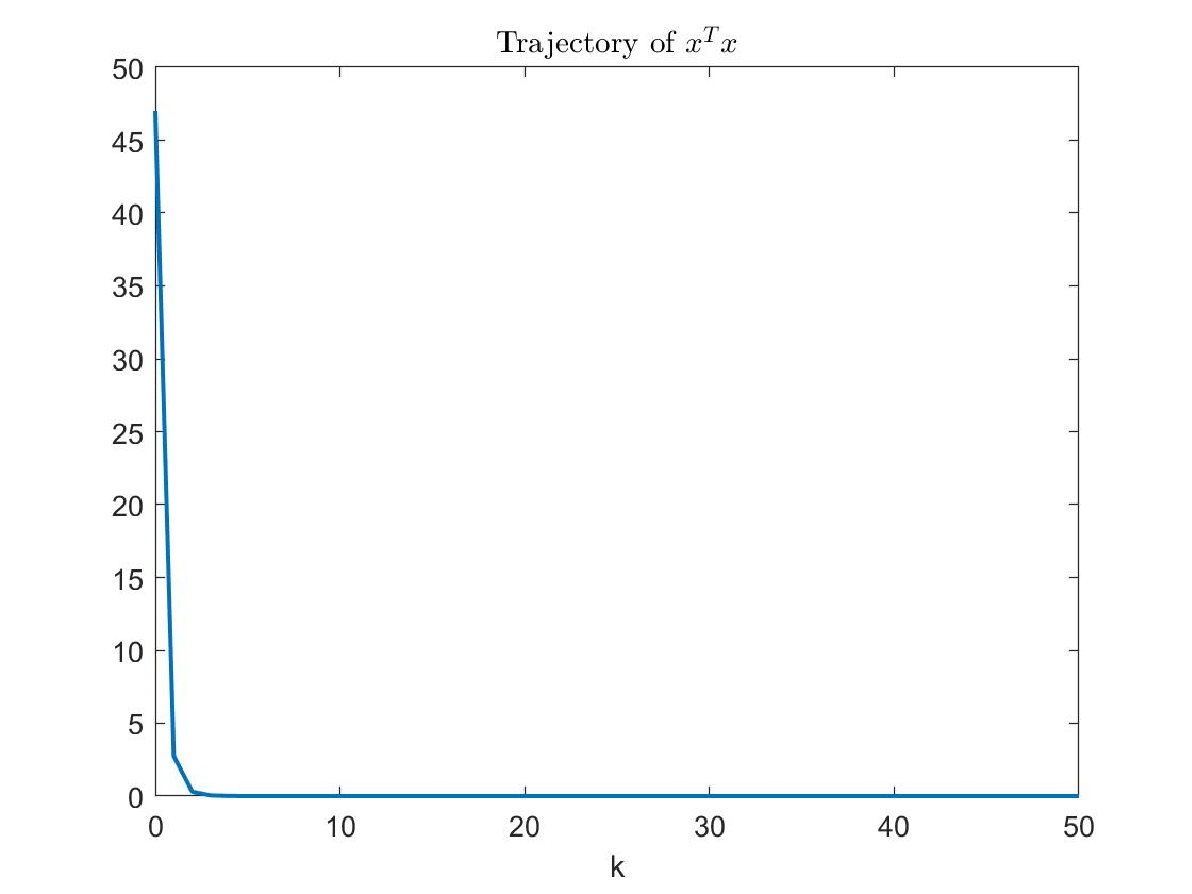}
\par\end{centering}
\caption{\label{fig4}: State trajectory }
\end{figure}

\section{Conclusion}
This paper has conducted an in-depth study on the decentralized control of linear deterministic systems where different controllers operate based on different state information, achieving the following major results. First, we proposed an expression for the controller gain matrices based on the matrix maximum principle, representing a significant innovation in the field of decentralized control. Second, by considering the algebraic Riccati equation in the steady-state case, we transformed the backward Riccati equation into a forward equation for solving, and applied a gradient descent algorithm to find the solution to the implicit equation. Simulation results indicate that our method effectively solves the problem of determining the controller gain matrices.


%
%

\end{document}